\theoremstyle{definition}
\newtheorem{defn}{Definition}
\newtheorem{thm}{Theorem}
\newtheorem{cor}{Corollary}
\newtheorem{lem}{Lemma}
\newtheorem{rmk}{Remark}
\newtheorem{cnj}{Conjecture}
\newcommand{\ind}{\mathrm{ind}}
\newcommand{\cind}{\mathrm{cind}}
\newcommand{\op}{\mathrm{op}}
\newcommand{\pa}{\mathcal{P}}
\newcommand{\opa}{\mathcal{O}}
\newcommand{\dis}{\mathcal{D}}
\newcommand{\lam}{\lambda}
\newcommand{\be}{\bar{e}}
\newcommand{\bo}{\bar{o}}
\newcommand{\tc}{\tilde{c}}
\newcommand{\Mod}[1]{\ (\mathrm{mod}\ #1)}
\title{Index of seaweed algebras and integer partitions}
\author{Seunghyun Seo}
\address{Department of Mathematics Education, Kangwon National University,
Chuncheon, Kangwon-do 24341, Republic of Korea} \email{shyunseo@kangwon.ac.kr}
\author{Ae Ja Yee}
\address{Department of Mathematics, The Pennsylvania State University,
University Park, PA 16802, USA} \email{auy2@psu.edu}
\begin{document}

\maketitle

\footnotetext[1]{The first author was supported by Basic Science Research Program through the National Research Foundation of Korea(NRF) funded by the Ministry of Education(120180215).} \vspace{0.5in}
\footnotetext[2]{The second author was partially supported by a grant ($\#$633963) from the Simons Foundation.} \vspace{0.5in}
\footnotetext[3]{2010 AMS Classification Numbers: Primary, 05A17; Secondary, 11P81.}

\noindent{\footnotesize{\bf Abstract.}  The index of a Lie algebra is an important algebraic invariant.  In 2000, Vladimir Dergachev and Alexandre Kirillov  defined seaweed subalgebras of $\mathfrak{gl}_n$ (or $\mathfrak{sl}_n$) and provided a formula for the index of a seaweed algebra using a certain graph, so called a meander.
\\ \indent
In a recent paper, Vincent Coll, Andrew Mayers, and Nick Mayers defined a new statistic for partitions, namely  the index of a partition, which arises from seaweed Lie algebras of type A. At the end of their paper, they presented an interesting conjecture, which involves integer partitions into odd parts. Motivated by their work, in this paper, we exploit various index statistics and the index weight generating functions for partitions.  In particular, we examine their conjecture by considering the generating function for partitions into odd parts.  We will also reprove another result  from their paper using generating functions.}

\section{Introduction}

An integer partition $\lambda$ is a weakly decreasing finite sequence of positive integers $(\lambda_1,\lambda_2,\ldots,\lambda_r)$ \cite{gea1}. The $\lambda_i$'s are called the parts of $\lambda$ and the sum of the parts is called the weight of $\lambda$. For a positive integer $n$, if the weight of $\lambda$ is $n$, then $\lambda$ is called a partition of $n$ and denoted by $\lambda\vdash n$. It is the convention that the empty sequence is the only partition of $0$.

In the theory of partitions, parity has played an important role. In \cite{gea270}, G. E.  Andrews gave an extensive study on parity questions in partition identities. The very first example of Andrews mentioned in \cite{gea270} is the following theorem by Euler.
\begin{quote}
{\bf Euler's partition identity:} The number of partitions of any positive integer $n$ into distinct parts equals the number of partitions of $n$ into  {\it odd} parts.
\end{quote}

Recently, the parity of a certain statistic assigned to partitions into odd parts was considered in the study of seaweed Lie algebras. In \cite{collmayersmayers}, V. Coll, A. Mayers, and N. Mayers defined a new statistic for partitions, namely  the index of a partition, which arises from seaweed Lie algebras of type A. At the end of their paper, they presented a conjecture on the difference between the number of partitions of $n$ into odd parts with an odd index and the number of partitions of $n$ into odd parts with an even index. The definition of the index statistic requires some other concepts from Lie algebra, so we defer it to  Section~\ref{sec2}.  Let $E_{\ind}(n)$ denote the number of partitions of $n$ into odd parts with an odd index minus the number of partitions of $n$ into odd parts with an even index. 

\begin{cnj}[Coll, Mayers, Mayers \cite{collmayersmayers}] \label{conj1}
$$\sum_{n\ge 0} |E_{\ind}(n)| q^n=\prod_{n=1}^{\infty} \frac{1}{1+(-1)^n q^{2n-1}}. $$ 
\end{cnj}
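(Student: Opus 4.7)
The plan is to recognize the right-hand side as a signed generating function over partitions into odd parts, and then to show that the signed count it produces coincides, up to an overall sign, with $E_{\ind}(n)$.

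First I would rewrite the infinite product combinatorially. Splitting according to the parity of $n$ gives
\begin{equation*}
\prod_{n\ge 1}\frac{1}{1+(-1)^n q^{2n-1}} = \prod_{k\ge 0}\frac{1}{1-q^{4k+1}}\cdot\prod_{k\ge 0}\frac{1}{1+q^{4k+3}},
\end{equation*}
and expanding the second factor as a geometric series exhibits the coefficient of $q^n$ as the signed sum $\sum_{\mu}(-1)^{m_3(\mu)}$, where $\mu$ runs over partitions of $n$ into odd parts and $m_3(\mu)$ is the total multiplicity in $\mu$ of the parts congruent to $3\pmod 4$.

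Next, by the definition of $E_{\ind}$, we have $E_{\ind}(n) = -\sum_{\lambda}(-1)^{\ind(\lambda)}$, where $\lambda$ runs over partitions of $n$ into odd parts. The heart of the proof is therefore a local identity comparing the two sign statistics: I would try to prove that, for every partition $\lambda$ into odd parts,
\begin{equation*}
\ind(\lambda) \equiv m_3(\lambda) \pmod 2,
\end{equation*}
or, more cautiously, that the parity of $\ind(\lambda)-m_3(\lambda)$ depends only on $|\lambda|$. Once such an identity is established, summing over $\lambda \vdash n$ and taking absolute values yields the conjectured identity immediately.

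The main obstacle is proving this parity identity. Because $\ind$ is defined via the meander of $\lambda$, it is a priori a \emph{global} invariant of $\lambda$, whereas $m_3$ is purely local. My plan for bridging this gap is to analyze the elementary operations on meanders appearing in the Dergachev--Kirillov reduction process: inserting or removing an odd part of a given residue modulo $4$ should change the number of meander components by a controlled amount, and tracking this change should reduce the parity identity to a finite case check on how parts of residues $1$ and $3$ modulo $4$ interact with the existing meander structure. As a back-up plan, if the direct reduction becomes entangled, I would instead try to prove the generating function identity inductively: establish that both $\sum_n|E_{\ind}(n)|\,q^n$ and the right-hand product satisfy the same functional equation obtained by peeling off a single occurrence of the smallest (or largest) odd part, thereby shifting the burden onto a purely local computation of how appending one part alters $\ind(\lambda) \pmod 2$.
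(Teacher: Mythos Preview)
Your plan has a genuine gap: it would reprove the paper's Theorem~1, not Conjecture~1. If the parity of $\ind(\lambda)-m_3(\lambda)$ depends only on $n$, then summing over $\lambda\vdash n$ gives $E_{\ind}(n)=\pm[q^n]\prod_{n\ge1}(1+(-1)^nq^{2n-1})^{-1}$, and taking absolute values on both sides yields only
\[
|E_{\ind}(n)|=\Bigl|[q^n]\prod_{n\ge1}\frac{1}{1+(-1)^nq^{2n-1}}\Bigr|,
\]
not the conjectured identity without absolute value on the right. The missing piece is precisely that the product has nonnegative coefficients, which the paper explicitly states is equivalent to Conjecture~1 and remains open. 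Your proposal never addresses this nonnegativity.

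Two further remarks. First, the ``main obstacle'' you identify is in fact the easy part: from the meander description, $\ind_\mu(\lambda)=2C+P-1$ with $P=(\op(\lambda)+\op(\mu))/2$, so for $\mu=(n)$ and $\lambda$ with all parts odd one gets $\ind(\lambda)\equiv(\ell(\lambda)+\op(n))/2-1\pmod 2$; since $\ell(\lambda)\equiv n+2m_3(\lambda)\pmod 4$, this immediately gives $\ind(\lambda)-m_3(\lambda)\equiv\lceil n/2\rceil-1\pmod 2$. No reduction process or functional-equation induction is needed. Second, your stronger candidate identity $\ind(\lambda)\equiv m_3(\lambda)\pmod 2$ is actually false (take $\lambda=(3)$: $\ind=2$, $m_3=1$); only the $n$-dependent version holds, and that version is exactly the paper's Theorem~1.
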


The main purpose of this paper is to study this counting function $E_{\ind}(n)$.  The following theorem is one of our main results.
\begin{thm} \label{thm1}
We have
$$
\sum_{n\ge 0}(-1)^{ \lceil \frac{n}{2} \rceil }E_{\ind}(n) q^n =\prod_{n=1}^{\infty} \frac{1}{1+(-1)^n q^{2n-1}}.
$$
\end{thm}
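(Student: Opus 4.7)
The plan is to reduce the theorem to a purely combinatorial parity identity for $\ind$ restricted to partitions into odd parts, and then to verify that identity using the meander description of the index from Section~\ref{sec2}.

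By definition $E_{\ind}(n) = -\sum_{\lambda}(-1)^{\ind(\lambda)}$, where the sum ranges over partitions $\lambda \vdash n$ with all parts odd, so
\[
\sum_{n \ge 0}(-1)^{\lceil n/2 \rceil} E_{\ind}(n)\, q^n
= -\sum_{\lambda} (-1)^{\lceil |\lambda|/2 \rceil + \ind(\lambda)}\, q^{|\lambda|},
\]
with $\lambda$ ranging over all partitions into odd parts. On the other hand, expanding each factor of the product as a geometric series (the factor indexed by $n$ contributes the sign $(-1)^m$ on the $m$th term exactly when $n$ is even, i.e.\ when the part $2n-1$ is $\equiv 3 \Mod{4}$) gives
\[
\prod_{n=1}^{\infty} \frac{1}{1+(-1)^n q^{2n-1}}
= \sum_{\lambda} (-1)^{N_3(\lambda)}\, q^{|\lambda|},
\]
where $N_3(\lambda)$ is the number of parts of $\lambda$ that are $\equiv 3 \Mod{4}$, counted with multiplicity. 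Comparing term by term, the theorem is equivalent to the parity identity
\[
\ind(\lambda) + \lceil |\lambda|/2 \rceil + 1 \equiv N_3(\lambda) \pmod 2 \qquad (\ast)
\]
for every partition $\lambda$ into odd parts.

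To establish $(\ast)$ I would argue by induction on the number of parts of $\lambda$, using the meander description of $\ind$ from Section~\ref{sec2}. Adjoining one more odd part $2k-1$ to $\lambda$ affects the three quantities in $(\ast)$ in ways that can be tracked separately: it changes $N_3(\lambda)$ by $[k \text{ even}]$; it shifts $\lceil |\lambda|/2 \rceil$ by an amount depending on the parities of $k$ and of $|\lambda|$; and it modifies the meander locally, so its effect on $\ind(\lambda) \pmod 2$ can be computed case by case. The main obstacle is the last point: since the meander depends on the ordered composition of parts, not just the multiset, one has to check that the mod-$2$ change in $\ind(\lambda)$ depends only on $k \pmod 2$ and $|\lambda| \pmod 2$. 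Once that verification is done, adding the three parity changes and showing they cancel completes the inductive step, and hence the proof.
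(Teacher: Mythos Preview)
Your reduction to the termwise parity identity $(\ast)$ is correct, and it is a legitimate alternative to the paper's route. The paper does not argue termwise: it proves a general statement (Theorem~\ref{thm2}) that for any set $S$ of partitions $F_S(i,-iq)=\sum_n(-1)^{\lceil n/2\rceil}\big(o_S(n)-e_S(n)\big)q^n$, using \eqref{eq1} to convert the parity of $\ind$ into a condition on $\op(\lambda)\bmod 4$, and then simply substitutes into the explicit product $F_{\opa}(t,q)=1/(tq,tq^3;q^4)_\infty$. Your approach unpacks that generating-function identity into a per-partition statement; it is more hands-on, while the paper's substitution yields Corollaries~1--3 uniformly.

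Your proposed proof of $(\ast)$, however, has a real problem. Adjoining a part $2k-1$ to $\lambda$ does \emph{not} modify the meander of type $\frac{\lambda}{n}$ locally: the bottom composition changes from the single block $(n)$ to the single block $(n+2k-1)$, so the entire system of bottom arcs is replaced, and a direct case analysis of how cycles and paths rearrange is not local at all. The fix is already in Section~\ref{sec2}: formula~\eqref{eq1} gives $\ind(\lambda)\equiv\tfrac{\op(\lambda)+\op(n)}{2}-1\pmod 2$, and for $\lambda$ with $r$ odd parts this is $\lceil r/2\rceil-1$. With this in hand, $(\ast)$ becomes $\lceil r/2\rceil+\lceil n/2\rceil\equiv N_3(\lambda)\pmod 2$, which follows in two lines from $r=N_1(\lambda)+N_3(\lambda)$ and $n\equiv N_1(\lambda)+3N_3(\lambda)\pmod 4$; no induction is needed.
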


It follows from Theorem~\ref{thm1} that Conjecture~\ref{conj1} is equivalent to $(-1)^{ \lceil \frac{n}{2} \rceil }E_{\ind}(n)\ge 0$, i.e., the infinite product on the right hand side has nonnegative coefficients.  The non-negativity seems very difficult to prove, and their conjecture is still open.

Along with the index statistic arising in Conjecture~\ref{conj1}, other index statistics can be defined on partitions. In particular, V. Coll, A. Mayers, and N. Mayers considered a case that is associated with partitions having parts of size one only, and they related that case to partitions into two-colored parts. In Section~\ref{sec5}, we will treat this index statistic and exploit the generating functions for various subsets of partitions.

This paper is organized as follows. In Section~\ref{sec2}, we will recollect some results on seaweed algebras from the literature. In Section~\ref{sec3}, we give the definition of the index of a partition and prove Theorem~\ref{thm1} by considering a weight generating function for partitions.  In Section~\ref{sec4}, we will find the generating function weighted with another index statistic, which will reprove a result of V. Coll, A. Mayers, and N. Mayers. In Section~\ref{sec5}, we will investigate further the index statistic considered in Section~4. We will  then provide some remarks and further conjectures in the last section.

\section{Preliminaries}\label{sec2}

A seaweed algebra is a subalgebra of $\mathfrak{sl}_n$, which was first introduced by V. Dergachev and A. Kirillov in \cite{dergachevkirillov}. The index of a Lie algebra is an important algebraic invariant in the study, which was given by J. Dixmier \cite{dixmier}, and  in the same paper, V. Dergachev and A. Kirillov  provided a combinatorial algorithm to compute the index of a seaweed algebra using a certain graph, namely the meander of a seaweed algebra.  Rather than giving the definitions of seaweed algebras and the index of Lie algebras, here we give a short description of the algorithm of V. Dergachev and A. Kirillov.  

A seaweed algebra can be defined by two compositions $\lambda$ and $\mu$ of $n$. In this case, we say that the seaweed is of type $\frac{\lambda}{\mu}=\frac{\lambda_1| \cdots | \lambda_{r}}{\mu_1|\cdots | \mu_s}$. 

The meander of a seaweed of type $\frac{\lambda_1| \cdots | \lambda_{r}}{\mu_1|\cdots | \mu_s}$ is a graph with $n$ vertices whose edges are given by $\lambda$ and $\mu$ as follows. 
\begin{itemize}
\item Place the $n$ vertices in a row.
\item First, partition the $n$ vertices into blocks of sizes $\lambda_1,\ldots, \lambda_r$, and let us call the vertices in the $i$-th block $v_{i,1},\ldots, v_{i,\lambda_i}$.
\item Connect $v_{i,j}$ and $v_{i,\lambda_{i}+1-j}$  by an edge for $1\le j\le \lfloor \lambda_i/2 \rfloor$. We draw these edges above the vertices and call them top edges.
\item Repeat this process with $\mu$. This time, we draw edges below the vertices and call them bottom edges.
\end{itemize}

Figure~\ref{fig1} shows the meander of a seaweed of type $\frac{3|2|1|1}{4|3}$.

\begin{figure}[ht]
\begin{tikzpicture}
\foreach \x in {0,...,6}
	\filldraw (\x*1, 0) circle (.5mm);
\draw (0,0) to[out=90, in=90] (2,0);
\draw (3,0) to[out=60, in=120] (4,0);
\draw (0,0) to[out=-60,in=-120] (3,0); 
\draw (1,0) to[out=-60,in=-120] (2,0); 
\draw (4,0) to[out=-60,in=-120] (6,0); 
\end{tikzpicture}
\caption{The meander of a seaweed of type $\frac{3|2|1|1}{4|3}$}\label{fig1}
\end{figure}
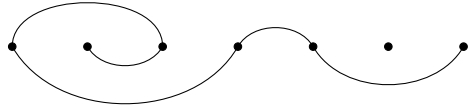 

In \cite{dergachevkirillov}, V. Dergachev and A. Kirillov showed that
\begin{equation}
\ind_{\mu}(\lam)=2C+P-1, \label{index}
\end{equation}
where $C$ and $P$ count the number of cycles and paths in the meander of a seaweed of type $\frac{\lambda}{\mu}$, respectively.  Here, we regards a vertex  of degree $0$, i.e., an isolated vertex, as a path.  
In Figure~\ref{fig1}, the index of the seaweed algebra  is $1$. 

Note that in the meander associated with a seaweed of type $\frac{\lambda}{\mu}$, a vertex is connected to another vertex by a top edge (resp. a bottom edge) unless it is the middle point in a block of size equal to an odd part of $\lambda$ (resp. $\mu$). Since in a simple graph $G$, if a path is of length at least one, then each of its end vertices has degree $1$, and an isolated vertex has degree $0$,  it can be easily shown that in the meander associated to a seaweed of type $\frac{\lambda}{\mu}$, 
\begin{equation}
P=\frac{\op(\lambda)+\op(\mu)}{2},  \label{p}
\end{equation}  
where $\op(\lam)$ count the number of odd parts in $\lam$. Thus, by \eqref{index} and \eqref{p}, we get
\begin{equation}
\ind_{\mu}(\lam)\equiv \frac{\op(\lam)+\op(\mu)}{2}-1 \pmod{2}. \label{eq1}
\end{equation}

\section{Parity difference of $\ind_n(\lam)$}\label{sec3}

We first define the index of a partition $\lambda$. 
\begin{defn} 
For a partition $\lambda$ of $n$, we define the index of $\lambda$ by the index of the seaweed of type $\frac{\lambda}{n}$ and denote it by $\ind(\lambda)$, i.e.,
\begin{equation*}
\ind(\lambda):=\ind_n(\lambda).
\end{equation*}
For $n=0$, if $\lambda$ is the empty partition, we define $\ind_n(\lambda)$ by $-1$. 
\end{defn}

In this section, we will consider the generating function for partitions with weight associated with index.  Let $\pa$ be the set of all partitions. For any subset $S$ of  $\pa$ and nonnegative integer $n$, define
$$
S_n := \{ \lam \in S \,|\,  \lam \vdash n \}.
$$

\begin{defn}
Given a subset $S$ of $\pa$, we define
\begin{align*}
    e_{S}(n)&:= \# \{\lam\in S_n \, |\,   \mbox{$\ind(\lam)$ is even}\},\\
    o_{S} (n)&:=\# \{\lam\in S_n \, |\,   \mbox{$\ind(\lam)$ is odd}\}.
\end{align*}
\end{defn}

We now consider the difference $o_{S}(n)-e_{S}(n)$. 
\begin{lem}  \label{lem1}
For any $n\ge 0$,
\begin{align*}
& o_S(n)-e_{S}(n) \\
&=\begin{cases}
\# \{\lam\in S_n \, |\,  \op(\lam) \equiv 0 \Mod{4}\}   -  \# \{\lam\in S_n \, |\,  \op(\lam) \equiv 2 \Mod{4}\} &\mbox{if $n$ is even,}\\ 
\# \{\lam\in S_n \, |\,  \op(\lam) \equiv 3 \Mod{4}\} - \# \{\lam\in S_n \, |\,  \op(\lam) \equiv 1 \Mod{4}\}  & \mbox{if $n$ is odd}.
\end{cases}
\end{align*}
\end{lem}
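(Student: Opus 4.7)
The plan is to specialize the congruence \eqref{eq1} to $\mu = (n)$ and then translate the parity of $\ind(\lam)$ into a condition on $\op(\lam)\pmod 4$. Since $\mu = (n)$ has exactly one part, $\op(\mu) = 1$ if $n$ is odd and $\op(\mu) = 0$ if $n$ is even. Also, because odd parts are the only ones contributing an odd amount to $|\lam|$, we have $\op(\lam) \equiv n \pmod 2$; in particular $\op(\lam)+\op(\mu)$ is always even, so \eqref{eq1} can be used as a genuine congruence on integers.

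Next I would split on the parity of $n$. If $n$ is even, then $\op(\mu)=0$, so \eqref{eq1} reads $\ind(\lam)\equiv \op(\lam)/2 - 1 \pmod 2$. Hence $\ind(\lam)$ is odd iff $\op(\lam)/2$ is even, i.e.\ $\op(\lam)\equiv 0\Mod{4}$, and $\ind(\lam)$ is even iff $\op(\lam)\equiv 2\Mod{4}$. If $n$ is odd, then $\op(\mu)=1$, so $\ind(\lam)\equiv (\op(\lam)+1)/2 - 1 \pmod 2$, giving $\ind(\lam)$ odd iff $\op(\lam)\equiv 3\Mod{4}$ and $\ind(\lam)$ even iff $\op(\lam)\equiv 1\Mod{4}$. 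Summing over $\lam\in S_n$ and subtracting then produces exactly the two cases of the lemma.

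The last thing to check is the boundary case $n=0$. The only partition is the empty one, with $\op=0\equiv 0\Mod 4$, and by the convention $\ind_0(\emptyset) = -1$, which is odd; so it contributes $+1$ to both sides in the even-$n$ formula, matching consistency. There is no real obstacle here: once \eqref{eq1} is in hand the statement is a direct arithmetic bookkeeping on $\op(\lam)\Mod 4$, and the only care needed is to verify that $\op(\lam)$ has the same parity as $n$ so that dividing $\op(\lam)+\op(\mu)$ by $2$ gives an integer whose parity can be analyzed.
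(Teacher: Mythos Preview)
Your argument is correct and is exactly the approach the paper intends: its proof is the single line ``It easily follows from \eqref{eq1},'' and what you have written is precisely the unpacking of that line, specializing $\mu=(n)$, noting $\op(\lam)\equiv n\pmod 2$, and reading off the residue of $\op(\lam)$ modulo $4$ in each parity case. Your separate check of the $n=0$ convention is a nice bit of care that the paper leaves implicit.
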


\begin{proof}
It easily follows from \eqref{eq1}.
\end{proof}

\begin{defn} \label{def:fF}
Given a subset $S$ of $\pa$, let 
$$ 
f_S (k,n):=\#\{\lam\in S_n \, |\,  \op(\lam)=k \}$$
and
$$
F_S (t,q):=\sum_{k,n\ge 0} f_S(k,n)t^k q^n.
$$
\end{defn}

From now on, we may omit the subscript $S$ in $o_S, e_S, f_S(k,n)$ and $F_S(t,q)$ if it is clear from the context. 
\begin{thm} \label{thm2}
We have
$$
F(i,-iq)
= \sum_{n\ge 0}(-1)^{ \lceil \frac{n}{2} \rceil }\big(o(n) -e(n)\big)q^n ,
$$
where $i=\sqrt{-1}$.
\end{thm}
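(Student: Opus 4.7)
The plan is to compare coefficients of $q^n$ on both sides directly, using Lemma~\ref{lem1} and a parity observation that kills half of the terms. Plugging $t=i$ and replacing $q$ by $-iq$ in Definition~\ref{def:fF} gives
$$F(i,-iq)=\sum_{n\ge 0}q^n(-i)^{n}\sum_{k\ge 0}f(k,n)\,i^{k},$$
so it suffices to show that for every $n$,
$$(-i)^{n}\sum_{k\ge 0}f(k,n)\,i^{k}=(-1)^{\lceil n/2\rceil}\bigl(o(n)-e(n)\bigr).$$

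The first thing to observe is a parity constraint: if $\lambda\vdash n$, then $n\equiv \op(\lambda)\pmod 2$ because even parts contribute an even amount to $n$. Consequently $f(k,n)=0$ unless $k\equiv n\pmod 2$, so the inner sum runs only over $k$ with the same parity as $n$. This lets me split into two cases and evaluate $i^{k}$ on the relevant residue classes modulo $4$.

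For even $n$ only even $k$ contribute, and $i^{k}=(-1)^{k/2}$, so
$$\sum_{k}f(k,n)\,i^{k}=\#\{\lambda\in S_n : \op(\lambda)\equiv 0\Mod{4}\}-\#\{\lambda\in S_n : \op(\lambda)\equiv 2\Mod{4}\},$$
which by Lemma~\ref{lem1} equals $o(n)-e(n)$. For odd $n$ only odd $k$ contribute, and $i^{k}=i\,(-1)^{(k-1)/2}$, giving
$$\sum_{k}f(k,n)\,i^{k}=i\Bigl(\#\{\lambda\in S_n : \op(\lambda)\equiv 1\Mod{4}\}-\#\{\lambda\in S_n : \op(\lambda)\equiv 3\Mod{4}\}\Bigr)=-i\bigl(o(n)-e(n)\bigr),$$
again by Lemma~\ref{lem1}.

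Finally I multiply by the outer factor $(-i)^{n}$: for even $n$ this produces $(-i)^{n}=(-1)^{n/2}=(-1)^{\lceil n/2\rceil}$, and for odd $n$ it produces $(-i)^{n}\cdot(-i)=(-i)^{n+1}=(-1)^{(n+1)/2}=(-1)^{\lceil n/2\rceil}$. Either way the prefactor matches $(-1)^{\lceil n/2\rceil}$, and the identity follows by summing over $n$. The argument is really just algebraic bookkeeping once Lemma~\ref{lem1} is in hand, so I do not expect a substantive obstacle; the only mild nuisance is keeping the four residue classes mod $4$ straight when translating between the roots-of-unity filter $\sum f(k,n)\,i^{k}$ and Lemma~\ref{lem1}'s signed count.
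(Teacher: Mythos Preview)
Your proof is correct and follows essentially the same approach as the paper: both exploit the parity constraint $k\equiv n\pmod 2$, split into even and odd $n$, evaluate the powers of $i$ on the surviving residue classes modulo $4$, and invoke Lemma~\ref{lem1} to identify the resulting signed counts with $o(n)-e(n)$. The only cosmetic difference is that the paper rewrites $t^k$ as $(t^2)^{k/2}$ or $t^{-1}(t^2)^{(k+1)/2}$ before substituting, whereas you substitute first and then read off $i^k$ directly; the bookkeeping is identical.
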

\begin{proof}
Note that for a partition $\lambda$ of $n$, if  $n$ is even or odd, then $\op(\lambda)$ is even or odd, respectively. Thus, 
\begin{align}
F(t,q)
&=\sum_{\text{$k,n$:\,even}}f(k,n)t^k q^n + \sum_{\text{$k,n$:\,odd}}f(k,n)t^k q^n\notag \\
&=\sum_{\text{$k,n$:\,even}}f(k,n)(t^2)^{\frac{k}{2}} q^n + \sum_{\text{$k,n$:\,odd}}f(k,n)t^{-1}(t^2)^{\frac{k+1}{2}} q^n, \label{Ftq}
\end{align}
and we have
\begin{align*}
F(i,-iq)
&=\sum_{\text{$k,n$:\,even}}f(k,n)(-1)^{\frac{k}{2}}  (-1)^{\frac{n}{2}}q^n + \sum_{\text{$k,n$:\,odd}} f(k,n)(-1)^{\frac{k+1}{2}} (-1)^{\frac{n+1}{2}} q^n\\
&=\sum_{\text{$n$:\,even}}(-1)^{\frac{n}{2}} \big(o(n) -e(n)\big) q^n + \sum_{\text{$n$:\,odd}}(-1)^{\frac{n+1}{2}} \big(o(n) -e(n) \big)  q^n \\
&=\sum_{n \ge 0}(-1)^{ \lceil \frac{n}{2} \rceil } \big(o(n) -e(n) \big)  q^n,
\end{align*}
where the second equality follows from Lemma~\ref{lem1}.
\end{proof}

Throughout this paper, we  adopt the following standard $q$-series notation:
\begin{equation*}
(a;q)_{\infty}:=\lim_{n\to \infty} \prod_{j=0}^{n-1}(1-aq^j),
\end{equation*}
and the following compressed notation:
\begin{equation*}
(a_1,\ldots, a_m;q)_{\infty}:=(a_1;q)_{\infty} \cdots (a_m;q)_{\infty}.
\end{equation*}

In what follows, we will see some special cases of Theorem~\ref{thm2}. 

\begin{cor}
We have 
$$
\sum_{n\ge 0}(-1)^{ \lceil \frac{n}{2} \rceil } \big(o_{\pa}(n) -e_{\pa}(n) \big)q^n = \frac{1}{(q,-q^2;-q^2)_\infty}.
$$
\end{cor}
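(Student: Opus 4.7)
The plan is to apply Theorem~\ref{thm2} to the case $S = \pa$ and then identify the resulting specialization of $F_{\pa}(t,q)$ with the claimed infinite product. By Theorem~\ref{thm2}, the left-hand side equals $F_{\pa}(i,-iq)$, so everything reduces to computing $F_{\pa}(t,q)$ in closed form and making a clean substitution.

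First I would write down the two-variable generating function $F_{\pa}(t,q) = \sum_{k,n \ge 0} f_{\pa}(k,n)\, t^{k} q^{n}$, where $t$ marks the number of odd parts and $q$ marks the weight. Splitting an arbitrary partition into its odd parts and its even parts, the standard product expansion gives
\begin{equation*}
F_{\pa}(t,q) \;=\; \prod_{j\ge 1}\frac{1}{1-tq^{2j-1}}\cdot \prod_{j\ge 1}\frac{1}{1-q^{2j}} \;=\; \frac{1}{(tq;q^2)_{\infty}\,(q^2;q^2)_{\infty}}.
\end{equation*}
This is the main input; it is a routine use of the combinatorial meaning of $(a;q)_{\infty}^{-1}$.

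Next I would perform the substitution $t \mapsto i$, $q \mapsto -iq$. The two key simplifications are $tq \mapsto i\cdot(-iq) = q$ and $q^{2} \mapsto (-iq)^{2} = -q^{2}$. Hence the base $q^{2}$ of each $q$-Pochhammer symbol becomes $-q^{2}$, the factor $(tq;q^{2})_{\infty}$ becomes $(q;-q^{2})_{\infty}$, and $(q^{2};q^{2})_{\infty}$ becomes $(-q^{2};-q^{2})_{\infty}$. Combining these using the compressed notation yields
\begin{equation*}
F_{\pa}(i,-iq) \;=\; \frac{1}{(q;-q^{2})_{\infty}\,(-q^{2};-q^{2})_{\infty}} \;=\; \frac{1}{(q,-q^{2};-q^{2})_{\infty}},
\end{equation*}
which is precisely the right-hand side of the corollary.

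There is no real obstacle here: the only subtlety is making sure the substitution $q \mapsto -iq$ is legitimate in the product (it is, since $F_{\pa}(t,q)$ is an absolutely convergent power series for $|q|<1$ and the substitutions affect only the base of the Pochhammer symbols, not the convergence). The content of the corollary is thus almost entirely packaged into Theorem~\ref{thm2}; the present statement is simply the specialization $S = \pa$ together with a one-line product-form evaluation.
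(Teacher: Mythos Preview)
Your proposal is correct and follows exactly the same approach as the paper: compute $F_{\pa}(t,q)=\dfrac{1}{(tq,q^2;q^2)_\infty}$ and substitute $t\mapsto i$, $q\mapsto -iq$ to obtain $\dfrac{1}{(q,-q^2;-q^2)_\infty}$, then invoke Theorem~\ref{thm2}. The paper's proof is simply a one-line version of what you wrote.
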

\begin{proof}
Since $F_{\pa}(t,q)=  \dfrac{1}{(tq,q^2;q^2)_\infty}$, we have $F_{\pa}(i,-iq)=  \dfrac{1}{(q,-q^2;-q^2)_\infty}$.
\end{proof}

\begin{cor}
Let $\dis:=\{\lam\in \pa | \mbox{ $\lam_i$'s are distinct}\}$. Then
$$
\sum_{n\ge 0}(-1)^{ \lceil \frac{n}{2} \rceil } \big(o_{\dis}(n) -e_{\dis}(n) \big)q^n = (-q,q^2;-q^2)_\infty.
$$
\end{cor}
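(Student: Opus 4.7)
The plan is to mimic the preceding corollary by computing $F_{\dis}(t,q)$ in closed form and then substituting $t=i$ and $q\mapsto -iq$, invoking Theorem~\ref{thm2}. Since a partition into distinct parts decouples into a set of distinct odd parts plus a set of distinct even parts, and since $t$ marks odd parts while $q$ marks weight, the bivariate generating function factors as
\begin{equation*}
F_{\dis}(t,q) \;=\; \prod_{j=0}^{\infty}(1+tq^{2j+1})\,\prod_{j=1}^{\infty}(1+q^{2j}) \;=\; (-tq,\,-q^2;\,q^2)_\infty.
\end{equation*}

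Next I would perform the substitution called for by Theorem~\ref{thm2}. Under $q\mapsto -iq$ we have $q^2\mapsto -q^2$, and the combination $tq$ with $t=i$ becomes $i\cdot(-iq)=q$. Hence
\begin{equation*}
F_{\dis}(i,-iq) \;=\; (-q;\,-q^2)_\infty\,(-(-q^2);\,-q^2)_\infty \;=\; (-q,\,q^2;\,-q^2)_\infty,
\end{equation*}
which is exactly the right-hand side of the stated identity.

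Combining this with Theorem~\ref{thm2} applied to $S=\dis$ yields the claim. There is no real obstacle here: the only subtlety is keeping careful track of the sign changes produced by $q\mapsto -iq$ inside $(\,\cdot\,;q^2)_\infty$, in particular noticing that the initial sign in $(-q^2;q^2)_\infty$ flips while the shift $q^2$ inside the base changes from $+q^2$ to $-q^2$. Everything else is just the standard Eulerian factorization of distinct partitions by parity of parts.
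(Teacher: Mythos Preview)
Your proof is correct and follows exactly the same approach as the paper: compute $F_{\dis}(t,q)=(-tq,-q^2;q^2)_\infty$ and then invoke Theorem~\ref{thm2} via the substitution $t=i$, $q\mapsto -iq$. You have merely supplied a bit more detail on why the generating function factors and how the signs transform under the substitution.
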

\begin{proof}
Since $F_{\dis}(t,q)=  (-tq,-q^2;q^2)_\infty$, we have $F_{\dis}(i,-iq)=  (-q,q^2;-q^2)_\infty$.
\end{proof}

\begin{cor} \label{cor3}
For $d\ge 1$, let $\opa^{d}:=\{\lam\in \pa \,  | \mbox{ $\lam_i$'s are congruent to $\pm$1 (mod $4d$)}\}$. Then
$$
\sum_{n\ge 0}(-1)^{ \lceil \frac{n}{2} \rceil } \big(o_{\opa^{d}}(n) -e_{\opa^{d}}(n) \big)q^n = \frac{1}{(q,-q^{4d-1};q^{4d})_\infty}.
$$
\end{cor}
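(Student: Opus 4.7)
The plan is to mirror the approach of the two preceding corollaries: identify the bivariate generating function $F_{\opa^d}(t,q)$ in closed product form, and then apply Theorem~\ref{thm2} by evaluating at $t=i$, $q \mapsto -iq$.

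The key observation is that every integer congruent to $\pm 1 \pmod{4d}$ is odd, since $4d$ is even. Consequently, for every $\lambda \in \opa^d$, every part of $\lambda$ is odd, so $\op(\lambda)$ equals the number of parts of $\lambda$. Since each part is of the form $4dk+1$ or $4dk+(4d-1)$ for some $k \ge 0$, the standard product expansion gives
$$
F_{\opa^d}(t,q) = \frac{1}{(tq,\, tq^{4d-1};\, q^{4d})_\infty},
$$
where $t$ tracks the part count and $q$ tracks the weight.

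Next I would substitute $t = i$ and $q \mapsto -iq$ per Theorem~\ref{thm2}. Because $(-i)^4 = 1$, one has $(-i)^{4d} = 1$ and $(-i)^{4d-1} = (-i)^{-1} = i$. Thus
$$
i \cdot (-iq) = q, \qquad i \cdot (-iq)^{4d-1} = i \cdot i \cdot q^{4d-1} = -q^{4d-1}, \qquad (-iq)^{4d} = q^{4d},
$$
and so
$$
F_{\opa^d}(i,-iq) = \frac{1}{(q,\, -q^{4d-1};\, q^{4d})_\infty},
$$
which is exactly the right-hand side of the claimed identity.

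There is no real obstacle: once the product formula for $F_{\opa^d}(t,q)$ is recognized, the proof reduces to tracking powers of $-i$. The only subtlety is that $4d-1$ is odd, which produces the sign change $q^{4d-1} \mapsto -q^{4d-1}$, while the modulus $q^{4d}$ is preserved because $4d$ is a multiple of $4$.
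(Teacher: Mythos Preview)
Your proposal is correct and follows exactly the paper's approach: write $F_{\opa^d}(t,q)=\dfrac{1}{(tq,\,tq^{4d-1};\,q^{4d})_\infty}$ and then invoke Theorem~\ref{thm2} via the substitution $t=i$, $q\mapsto -iq$. You simply supply more detail than the paper does, namely the justification that every part is odd (so $\op(\lambda)$ is the part count) and the explicit tracking of the powers of $-i$.
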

\begin{proof}
Since  $F_{\opa^{d}}(t,q)=  \dfrac{1}{(tq,tq^{4d-1};q^{4d})_\infty}$,  $F_{\opa^{d}}(i,-iq)=  \dfrac{1}{(q, -q^{4d-1}; q^{4d})_\infty}$
\end{proof}

Let $\opa$ be the set of partitions into odd parts.  Clearly, $\opa=\opa^{1}$, so $E_{\ind}(n)= o_{\opa}(n) -e_{\opa}(n)$.  Therefore, Theorem~\ref{thm1} is a special case of Corollary~\ref{cor3}.

\section{ $\ind_{1^n}(\lam)$} \label{sec4}

In \cite{collmayersmayers}, V. Coll, A. Mayers, and N. Mayers also considered seaweed algebras of type $\frac{\lambda}{1^{|\lambda|}}$. For $k\ge 0$, let 
$$
c_n(k):=\# \{ \lambda\vdash n \,|\,   \ind_{1^n} (\lambda) =n-k-1\}
$$   
and
$$
c(k)=\lim_{n\to \infty} c_{n}(k).
$$
\begin{thm}[Coll, Mayers, Mayers \cite{collmayersmayers}] \label{thm3}
We have
$$
\sum_{k\ge 0} c(k) x^{k}=\frac{1}{(x;x)_{\infty}^2}.
$$
\end{thm}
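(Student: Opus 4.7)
The plan is to translate $\ind_{1^n}(\lam)$ into a simple arithmetic statistic on $\lam$ by direct inspection of the meander, and then to read off the generating function combinatorially. Since $\mu=1^n$, every block of the bottom row has size one, so the meander of $\frac{\lam}{1^n}$ has no bottom edges. It is therefore a disjoint union of top arcs (so $C=0$), and a block of $\lam$ of size $\lam_i$ contributes $\lfloor \lam_i/2\rfloor$ nested top arcs together with an isolated central vertex when $\lam_i$ is odd. Thus the block produces $\lceil \lam_i/2\rceil$ paths, and \eqref{index} gives
\begin{equation*}
\ind_{1^n}(\lam)=\sum_i \lceil \lam_i/2\rceil -1.
\end{equation*}
Consequently the defining condition $\ind_{1^n}(\lam)=n-k-1$ becomes $\sum_i \lfloor \lam_i/2\rfloor = k$.

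Next, I would verify that $c_n(k)$ stabilizes and identify the limit explicitly. The parts of $\lam$ equal to $1$ contribute $0$ to $\sum_i \lfloor \lam_i/2\rfloor$, while every part $\ge 2$ contributes a positive amount. Decompose $\lam=(\lam^{\ge 2},1^r)$, where $\lam^{\ge 2}$ collects the parts of $\lam$ that are $\ge 2$. Then $\lam^{\ge 2}$ is subject only to $\sum_i \lfloor \lam_i^{\ge 2}/2\rfloor = k$, and $r=n-|\lam^{\ge 2}|$ is forced. Since $\lfloor m/2\rfloor\ge m/3$ for all $m\ge 2$, any such $\lam^{\ge 2}$ automatically satisfies $|\lam^{\ge 2}|\le 3k$, so for every $n\ge 3k$ the map $\lam\mapsto \lam^{\ge 2}$ is a bijection. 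Hence the limit exists and
\begin{equation*}
c(k)=\#\bigl\{\lam\in\pa \,:\, \lam_i\ge 2\text{ for all } i,\ \textstyle\sum_i \lfloor \lam_i/2\rfloor=k\bigr\}.
\end{equation*}

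Finally, I would parse each such partition by parity, writing every even part as $2m$ (with $m\ge 1$) and every odd part $\ge 3$ as $2m+1$ (with $m\ge 1$). Each part contributes exactly $m$ to $k$, and the two kinds of parts may be chosen independently, so the generating function factors as
\begin{equation*}
\sum_{k\ge 0}c(k)\,x^k=\prod_{m\ge 1}\frac{1}{1-x^m}\cdot\prod_{m\ge 1}\frac{1}{1-x^m}=\frac{1}{(x;x)_\infty^{2}}.
\end{equation*}
The only delicate point is the meander count in the first paragraph, where one must handle even and odd block sizes uniformly; once that is in place, the rest is straightforward bookkeeping. A variant of this argument would avoid the limit entirely by working with the bivariate generating function $\sum_{\lam\in\pa}x^{\sum_i\lfloor \lam_i/2\rfloor}q^{|\lam|}$ and isolating the $q$-independent part, but the direct bijective route above seems cleanest.
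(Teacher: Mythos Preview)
Your proof is correct and follows essentially the same path as the paper: compute $\ind_{1^n}(\lam)$ explicitly from the meander (the paper writes it as $\frac{\op(\lam)+n}{2}-1$, which equals your $\sum_i\lceil\lam_i/2\rceil-1$), identify $k=\sum_i\lfloor\lam_i/2\rfloor=\sum_{i\ge 1}i(f_{2i}+f_{2i+1})$, argue stabilization by bounding the parts $\ge 2$, and recognize the limit as partitions with no $1$'s weighted by $\sum_i\lfloor\lam_i/2\rfloor$. The only real difference is in the final step: you read off the product $\frac{1}{(x;x)_\infty^2}$ directly from the parity decomposition of parts, while the paper first proves the bivariate refinement $\sum_{k,n}\tc_n(k)t^kq^n=\frac{1}{(tq^2,tq^3;tq^2)_\infty}$ via a substitution in $F_{\pa}(t,q)$ and then sets $q=1$; the paper's route thus yields a finer $q$-identity along the way, but your direct argument is more transparent for Theorem~\ref{thm3} itself.
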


They prove this theorem by constructing a bijection between the set of partitions counted by $c(k)$ and the set of two colored partitions. This result can also be proved by generating function manipulations. 

First, note that in the meander associated with $\frac{\lambda}{1^n}$, there are no cycles.  Thus, by \eqref{index} and \eqref{p}, we get
$$
\ind_{1^{n}}(\lambda)= P-1=\frac{\op(\lambda)+n}{2}-1.
$$ 
So, by the definition of $c_n(k)$, we see that
\begin{equation}
c_n(k)=f_{\pa}(n-2k,n).\label{cnk}
\end{equation}

Let $f_i$ be the number of parts of size $i$ in $\lambda$. Then
\begin{equation*}
n=\sum_{i\ge 1}\lambda_i=\sum_{i\ge 1}  i  f_i= \sum_{i\ge 1} f_{2i-1} + 2\sum_{i\ge 1} i (f_{2i}+f_{2i+1}) . 
\end{equation*}
Since  $\lambda$ has $n-2k$ odd parts,  we get 
\begin{equation*}
k=\sum_{i\ge 1} i(f_{2i}+f_{2i+1}) \ge \sum_{i\ge 1} (f_{2i}+f_{2i+1}),
\end{equation*}
which shows that  $\lambda$ can have at most $k$ parts of size greater than $1$ and the largest part of $\lambda$ cannot exceed $2k+1$.  Thus, for a given $k$, $c_n(k)$ stabilizes  as $n\to \infty$, and its limit $c(k)$ counts the number of partitions with parts greater than $1$ such that  $\sum_{i\ge 1} i(f_{2i}+f_{2i+1})=k,$ i.e.,
\begin{equation*}
c(k)=\# \{\lambda \,|\, f_1=0, \;\,  \sum_{i\ge 1} i(f_{2i}+f_{2i+1})= k\}.
\end{equation*}

Define $\tc_n(k)$ by 
\begin{equation}
\tc_n(k) = \# \{\lambda \vdash n\,|\, f_1=0,\;\, \sum_{i\ge 1} i(f_{2i}+f_{2i+1})=k \}. \label{cntilde}
\end{equation}
It is clear that $\sum_{n \ge 0} \tc_n(k) = c(k)$.

Let us consider the generating functions for $c_n(k)$ and $\tc_n(k)$.  By \eqref{cnk}, we see that $c_0(0)=1$ and $c_n(k)=0$ if $n<2k$. 

\begin{thm} \label{thm:cnk}
We have
\begin{align*}
\sum_{ k,n\ge 0}  c_{n}(k) t^{k}q^{n}&=\frac{1}{(q,tq^2;tq^2)_{\infty}},\\
\sum_{ k,n\ge 0} \tc_n(k)  t^{k} q^{n} &=\frac{1}{(tq^2,tq^3;tq^2)_{\infty}}.
\end{align*}
\end{thm}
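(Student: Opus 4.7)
The plan is to establish both generating function identities by factoring the sum over partitions into a product over part sizes, and then matching the product with the $q$-Pochhammer expressions on the right.

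For the first identity, I would use the relation $c_n(k) = f_{\mathcal{P}}(n - 2k, n)$ given in~\eqref{cnk}, which identifies $c_n(k)$ as the number of partitions $\lambda \vdash n$ with exactly $n - 2k$ odd parts. Equivalently, $k = (|\lambda| - \op(\lambda))/2$, so
\[
\sum_{k,n \ge 0} c_n(k)\, t^k q^n \;=\; \sum_{\lambda \in \mathcal{P}} t^{(|\lambda|-\op(\lambda))/2} q^{|\lambda|}.
\]
Writing each $\lambda$ via its multiplicities $f_i$, an odd part $2j+1$ with multiplicity $f_{2j+1}$ contributes $j f_{2j+1}$ to the $t$-exponent and $(2j+1)f_{2j+1}$ to the $q$-exponent, while an even part $2j$ contributes $j f_{2j}$ and $2j f_{2j}$ respectively. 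Summing the geometric series in each $f_i$ and splitting off the $j=0$ factor for part size $1$ gives
\[
\frac{1}{1-q}\prod_{j\ge 1}\frac{1}{(1-t^j q^{2j+1})(1-t^j q^{2j})}.
\]
The final step is a straightforward unpacking of the right-hand side, using $(tq^2;tq^2)_\infty = \prod_{j\ge 1}(1 - t^j q^{2j})$ and $(q;tq^2)_\infty = (1-q)\prod_{j \ge 1}(1 - t^j q^{2j+1})$, to confirm equality.

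For the second identity, I would start directly from the defining condition~\eqref{cntilde}: $\tilde c_n(k)$ counts partitions of $n$ with $f_1 = 0$ and $k = \sum_{i\ge 1} i(f_{2i} + f_{2i+1})$. Each allowed part size $2i$ (resp.\ $2i+1$) with $i \ge 1$ contributes a factor $\frac{1}{1-t^i q^{2i}}$ (resp.\ $\frac{1}{1-t^i q^{2i+1}}$), yielding
\[
\sum_{k,n\ge 0}\tilde c_n(k)\, t^k q^n \;=\; \prod_{i\ge 1}\frac{1}{(1 - t^i q^{2i})(1 - t^i q^{2i+1})},
\]
and then I would verify that this product coincides with $\frac{1}{(tq^2, tq^3; tq^2)_\infty}$ by reading off the factors of $(tq^2;tq^2)_\infty$ and $(tq^3;tq^2)_\infty$.

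Since each identity reduces to a bookkeeping computation with multiplicities, there is no serious obstacle; the only point that requires a bit of care is the separation of the $j = 0$ term in the first identity (which produces the distinguished $\frac{1}{1-q}$ factor corresponding to parts of size $1$), together with correctly aligning the ranges of the index $j$ in the two Pochhammer factors $(q;tq^2)_\infty$ and $(tq^2;tq^2)_\infty$.
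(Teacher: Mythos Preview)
Your proposal is correct; both identities follow exactly as you describe by factoring over part sizes and matching the resulting products with the Pochhammer symbols.

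The paper takes a slightly different route. Rather than computing the product directly, it recognizes
\[
\sum_{k,n\ge 0} c_n(k)\, t^{n-2k} q^n \;=\; F_{\pa}(t,q) \;=\; \frac{1}{(tq,q^2;q^2)_\infty},
\]
and then performs the change of variables $t\mapsto t^{-1/2}$, $q\mapsto t^{1/2}q$ to convert the exponent $n-2k$ into $k$. The second identity is obtained from the first by multiplying by $(1-tq)$ (to kill parts of size~$1$) before substituting. Your direct approach avoids the somewhat artificial-looking half-integer substitution and makes the combinatorics transparent; the paper's approach has the advantage of recycling the already-known generating function $F_{\pa}(t,q)$ and treating both identities with a single mechanical trick. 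Either way the argument is short.
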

\proof
Note that 
\begin{align*}
\sum_{ k,n\ge 0} c_n(k) t^{n-2k}q^n &= \sum_{ k,n\ge 0} f_{\pa}(n-2k,n) t^{n-2k}q^n\\
& =F_{\pa}(t,q)  \\
&= \frac{1}{(tq,q^2;q^2)_{\infty}}.
\end{align*}
By substituting $t^{-1/2}$ and $t^{1/2}q$ for $t$ and $q$, respectively,  we get
$$
\sum_{ k,n\ge 0} c_n(k) t^{k} q^n 
= F_{\pa}(t^{-1/2},t^{1/2}q)
=\frac{1}{(q,tq^2;tq^2)_{\infty}}.
$$

We now prove the second identity. Let
\begin{equation*}
\tilde{F}_{\pa} (t,q) = \sum_{ k,n\ge 0} \tc_n(k) t^{n-2k} q^n. 
\end{equation*}
By \eqref{cntilde}, we see that the  partitions counted by $\tc_n(k)$ cannot have parts of size $1$. Thus
$$
\tilde{F}_{\pa}(t,q)=(1-tq)F_{\pa}(t,q).
$$
Hence,
\begin{equation*}
\sum_{ k,n\ge 0}\tc_n(k) t^{n-2k} q^n=(1-tq) F_{\pa} (t,q).
\end{equation*}
Upon substituting $t^{-1/2}$ and $t^{1/2}q$ for $t$ and $q$ above, we immediately get the second identity. 
\endproof

When $q=1$, the second identity of Theorem~\ref{thm:cnk} yields Theorem~\ref{thm3}.

\section{Parity difference of $\ind_{1^n}(\lam)$}\label{sec5}

We first define the conjugated index of a partition $\lambda$. 
\begin{defn} 
For a partition $\lambda$ of $n$, we define the conjugated index of $\lambda$ by the index of the seaweed of type $\frac{\lambda}{1^n}$ and denote it by $\cind(\lambda)$, i.e.,
\begin{equation*}
\cind(\lambda):=\ind_{1^n}(\lambda).
\end{equation*}
For $n=0$, if $\lambda$ is the empty partition, we define $\ind_{1^n} (\lambda)$ by $-1$. 
\end{defn}

In this section, we will consider the generating function for partitions with the conjugated index weight. 

\begin{defn}
Given a subset $S$ of $\pa$, we define
\begin{align*}
    \be(n)=
    \be_{S}(n)&:= \# \{\lam\in S_n \, |\,   \mbox{$\cind(\lam)$ is even}\},\\
    \bo(n)=
    \bo_{S} (n)&:=\# \{\lam\in S_n \, |\,   \mbox{$\cind(\lam)$ is odd}\}.
\end{align*}
\end{defn}

We now consider the difference $\bo(n)-\be(n)$.

\begin{lem}  \label{lem2}
For any $n \ge 0$,
\begin{align*}
&(-1)^{\lfloor \frac{n}{2} \rfloor} \big(\bo(n)-\be(n) \big)  \\
&=\begin{cases}
\#\{\lam\in S_n : \op(\lam) \equiv 0 \Mod{4})\} -\#\{\lam\in S_n : \op(\lam) \equiv 2 \Mod{4}\} & \mbox{if $n$ is even,}\\ 
\#\{\lam\in S_n : \op(\lam) \equiv 3 \Mod{4}\} -\#\{\lam\in S_n : \op(\lam) \equiv 1 \Mod{4}\} & \mbox{if $n$ is odd.}
\end{cases}
\end{align*}
\end{lem}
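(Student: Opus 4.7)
My plan is to exploit the explicit formula
$$\cind(\lambda) = \frac{\op(\lambda) + n}{2} - 1$$
obtained in Section~\ref{sec4}: the meander of the seaweed $\lambda/1^n$ contains no cycles, so $C = 0$ in \eqref{index}, and \eqref{p} applies with $\op(1^n) = n$. Consequently the parity of $\cind(\lambda)$ is governed by $(\op(\lambda) + n)/2 \pmod 2$, equivalently by the residue of $\op(\lambda) + n$ modulo $4$; specifically, $\cind(\lambda)$ is odd if and only if $\op(\lambda) + n \equiv 0 \pmod 4$. Since any $\lambda \vdash n$ satisfies $\op(\lambda) \equiv n \pmod 2$, the sum $\op(\lambda) + n$ is always even, so this finer congruence is well-defined.

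The statement then follows from a case analysis on $n \pmod 4$. When $n \equiv 0 \pmod 4$, $\op(\lambda)$ is even and $\cind(\lambda)$ is odd iff $\op(\lambda) \equiv 0 \pmod 4$, so $\bo(n) - \be(n) = \#\{\op \equiv 0\} - \#\{\op \equiv 2\}$; here $\lfloor n/2 \rfloor$ is even, and the top line of the lemma follows directly. When $n \equiv 2 \pmod 4$, one instead gets $\cind(\lambda)$ odd iff $\op(\lambda) \equiv 2 \pmod 4$, hence $\bo(n) - \be(n) = \#\{\op \equiv 2\} - \#\{\op \equiv 0\}$; but now $\lfloor n/2 \rfloor$ is odd, so the overall sign flips the difference back to the required form. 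The odd cases $n \equiv 1, 3 \pmod 4$ are handled in exactly the same manner: one computes $\bo(n) - \be(n) = \pm \bigl(\#\{\op \equiv 3\} - \#\{\op \equiv 1\}\bigr)$ with sign determined by $n \pmod 4$, and $(-1)^{\lfloor n/2 \rfloor}$ is set up precisely to absorb that sign.

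There is no serious obstacle; this is essentially the same argument used for Lemma~\ref{lem1}, the only structural difference being that $\op(1^n) = n$ (rather than $\op(n) \in \{0,1\}$ as for $\mu = n$), which is what forces the auxiliary factor $(-1)^{\lfloor n/2 \rfloor}$ and the refinement of $n$ itself modulo $4$. The one bookkeeping point worth checking is the base case $n = 0$: by convention the empty partition has $\cind = -1$, hence is counted by $\bo(0)$, giving $\bo(0) - \be(0) = 1$; the right-hand side contributes $\#\{\op \equiv 0\} - \#\{\op \equiv 2\} = 1 - 0 = 1$ with sign $(-1)^{\lfloor 0/2 \rfloor} = 1$, so the identity holds at $n = 0$ as well.
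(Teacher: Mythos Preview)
Your argument is correct and is essentially the same approach as the paper's, just with the case analysis written out explicitly; the paper's proof is a one-line appeal to \eqref{eq1} with $\op(1^n)=n$. One small remark: you do not actually need the exact formula $\cind(\lambda)=\tfrac{\op(\lambda)+n}{2}-1$ from Section~\ref{sec4} (which relies on $C=0$); the parity congruence \eqref{eq1} alone already suffices, and that is what the paper uses.
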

\begin{proof}
Note that $\op(1^n)=n$. Thus it easily follows from \eqref{eq1}.
\end{proof}

Recall $f(k,n)$ and $F(t,q)$ in Definition~\ref{def:fF}.

\begin{thm} \label{thm5}
We have
$$
F(i,-iq)
= \sum_{n\ge 0} (-1)^n \big(\bo(n) -\be(n) \big)q^n ,
$$
where $i=\sqrt{-1}$.
\end{thm}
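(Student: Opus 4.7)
The proof will closely parallel the argument used for Theorem~\ref{thm2}, but with Lemma~\ref{lem2} replacing Lemma~\ref{lem1}. The only new observation to check is that the discrepancy between the sign $(-1)^{\lceil n/2 \rceil}$ appearing in Theorem~\ref{thm2} and the sign $(-1)^{n}$ appearing in Theorem~\ref{thm5} is precisely absorbed by the extra $(-1)^{\lfloor n/2 \rfloor}$ on the left-hand side of Lemma~\ref{lem2} (compared to Lemma~\ref{lem1}).

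The first step is to reuse the split of $F(t,q)$ from \eqref{Ftq}, which is valid here because for any $\lambda \vdash n$ the parities of $\op(\lambda)$ and $n$ agree. Substituting $t = i$ and $q \mapsto -iq$, exactly as in the proof of Theorem~\ref{thm2}, I obtain
\begin{align*}
F(i,-iq)
&= \sum_{n \text{ even}} (-1)^{n/2}\Bigl[\#\{\lam\in S_n : \op(\lam) \equiv 0\Mod 4\} - \#\{\lam\in S_n : \op(\lam) \equiv 2\Mod 4\}\Bigr] q^n \\
&\quad + \sum_{n \text{ odd}} (-1)^{(n+1)/2}\Bigl[\#\{\lam\in S_n : \op(\lam) \equiv 3\Mod 4\} - \#\{\lam\in S_n : \op(\lam) \equiv 1\Mod 4\}\Bigr] q^n.
\end{align*}

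In the second step, I would apply Lemma~\ref{lem2} to rewrite each bracketed mod-$4$ difference in terms of $\bo(n) - \be(n)$. For $n$ even, Lemma~\ref{lem2} gives the bracketed expression equal to $(-1)^{n/2}(\bo(n)-\be(n))$, and for $n$ odd, equal to $(-1)^{(n-1)/2}(\bo(n)-\be(n))$. Combining these with the external signs above yields overall coefficients $(-1)^{n/2+n/2} = (-1)^n = 1$ on the even terms and $(-1)^{(n+1)/2+(n-1)/2} = (-1)^n = -1$ on the odd terms, producing $\sum_{n\ge 0}(-1)^n(\bo(n)-\be(n))q^n$, as claimed.

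There is essentially no obstacle; the argument is a bookkeeping exercise in parities and powers of $i$. The one thing I would be careful about is the boundary case $n=0$: the empty partition has $\cind=-1$ (odd), so $\bo(0)-\be(0)=1$, and the constant term of $F(i,-iq)$ is $f(0,0)=1$, matching $(-1)^0 \cdot 1 = 1$. Everything else is routine substitution.
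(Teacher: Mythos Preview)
Your proposal is correct and follows essentially the same approach as the paper's proof: both start from the parity split \eqref{Ftq}, substitute $t=i$ and $q\mapsto -iq$, and then invoke Lemma~\ref{lem2} to convert the mod-$4$ counts into $\bo(n)-\be(n)$, with the sign bookkeeping producing the factor $(-1)^n$. Your exposition simply makes the intermediate mod-$4$ differences explicit (and checks the $n=0$ case), whereas the paper compresses this into a single line.
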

\begin{proof}
By \eqref{Ftq},  we see that
\begin{align*}
&F(i,-iq) \\
&=\sum_{\text{$k,n$:\,even}}f(k,n)(-1)^{\frac{k}{2}} (-1)^{\frac{n}{2}}q^n + \sum_{\text{$k,n$:\,odd}}f(k,n)(-1)^{\frac{k+1}{2}} (-1)^{\frac{n+1}{2}}q^n\\
&=\sum_{\text{$n$:\,even}}(-1)^{\frac{n}{2}} \big(\bo(n) -\be(n) \big) (-1)^{\frac{n}{2}} q^n + \sum_{\text{$n$:\,odd}}(-1)^{\frac{n-1}{2}} \big(\bo(n) -\be(n) \big)(-1)^{\frac{n+1}{2}} q^n \\
&=\sum_{n \ge 0}(-1)^n \big(\bo(n) -\be(n) \big) q^n,
\end{align*}
where the second equality follows from Lemma~\ref{lem2}. 
\end{proof}

By comparing Theorems~\ref{thm2} and \ref{thm5}, we get the following corollary. 

\begin{cor}
For any $n \ge 0$,
$$
o(n) -e(n) = (-1)^{\lfloor \frac{n}{2} \rfloor} \big(\bo(n) -\be(n) \big).
$$
In particular, 
$$
|o(n) -e(n)| = |\bo(n) -\be(n)|.
$$
\end{cor}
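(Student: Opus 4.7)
The plan is to equate the two generating function identities already established in Theorems~\ref{thm2} and~\ref{thm5}. Both theorems compute the same formal power series $F(i,-iq)$, but express it with different signed coefficients: Theorem~\ref{thm2} gives $F(i,-iq)=\sum_{n\ge 0}(-1)^{\lceil n/2\rceil}(o(n)-e(n))q^n$, while Theorem~\ref{thm5} gives $F(i,-iq)=\sum_{n\ge 0}(-1)^n(\bo(n)-\be(n))q^n$.

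First, I would set the two expressions equal and compare coefficients of $q^n$, obtaining
$$(-1)^{\lceil n/2 \rceil}(o(n)-e(n)) = (-1)^n (\bo(n)-\be(n)).$$
Next, I would multiply both sides by $(-1)^{\lceil n/2 \rceil}$ and use the elementary identity $\lceil n/2 \rceil + \lfloor n/2 \rfloor = n$, which gives $(-1)^{n-\lceil n/2 \rceil}=(-1)^{\lfloor n/2 \rfloor}$. This yields
$$o(n)-e(n)=(-1)^{\lfloor n/2 \rfloor}(\bo(n)-\be(n)),$$
which is the first assertion. Taking absolute values immediately gives the second assertion, since $|(-1)^{\lfloor n/2 \rfloor}|=1$.

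There is essentially no obstacle here: the corollary is a direct consequence of the fact that the two preceding theorems compute the same generating function. The only minor step requiring any care is the parity bookkeeping $n=\lceil n/2 \rceil+\lfloor n/2 \rfloor$, which holds uniformly for even and odd $n$. Thus the entire proof amounts to a one-line comparison of coefficients followed by a sign simplification.
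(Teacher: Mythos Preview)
Your proposal is correct and follows exactly the approach the paper takes: the corollary is stated immediately after Theorems~\ref{thm2} and~\ref{thm5} with the remark that it follows by comparing them, and your coefficient comparison together with the identity $n=\lceil n/2\rceil+\lfloor n/2\rfloor$ supplies precisely the missing one-line justification.
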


\begin{rmk} 
Besides the partitions $\mu=n$ and $\mu=1^n$, we  can consider other partitions of $n$.
\begin{enumerate}
   \item 
Let $\mu$ be a partition of $n$ satisfying that
$$
\begin{cases}
\op(\mu) \equiv 1 \Mod{4} & \mbox{if $n$ is odd,}\\ 
\op(\mu) \equiv 0 \Mod{4} & \mbox{if $n$ is even.}
\end{cases}
$$
For example, let $\mu=2^{\lfloor \frac{n}{2} \rfloor} 1^{n-2\lfloor \frac{n}{2} \rfloor}$. Then we have 
$$
\ind_{\mu}(\lam) \equiv \ind_{n}(\lam) \pmod{2}.
$$
Thus $\# \{\lam\in S_n \, |\,   \mbox{$\ind_{\mu}(\lam)$ is odd (resp.~even)}\}$ is the same as $o(n)$ (resp. $e(n)$). Moreover, the weight generating functions are the same.

\item 
Let $\mu$ be a partition of $n$ satisfying that
$$
\op(\mu) \equiv n \pmod{4}.
$$
For example,  let  $\mu=4^{\lfloor \frac{n}{4} \rfloor} 1^{n-4\lfloor \frac{n}{4} \rfloor}$. Then we have 
$$
\ind_{\mu}(\lam) \equiv \ind_{1^n}(\lam) \pmod{2}.
$$
Thus $\# \{\lam\in S_n \, |\,   \mbox{$\ind_{\mu}(\lam)$ is odd (resp.~even)}\}$ is the same as $\bo(n)$ (resp. $\be(n)$). Moreover, the weight generating functions are the same.

\end{enumerate}
\end{rmk}

\section{Concluding Remarks}

In this section, we provide some remarks along with further conjectures related to index generating functions.

As mentioned in Introduction, Conjecture~\ref{conj1} is equivalent to the following conjecture. 

\begin{cnj} For $n\ge 1$,
$$(-1)^{ \lceil \frac{n}{2} \rceil } \big(o_{\opa}(n) -e_{\opa}(n) \big)=|e_{\opa}(n) -o_{\opa}(n) |.$$
\end{cnj}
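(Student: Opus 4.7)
The conjecture is equivalent to the inequality $(-1)^{\lceil n/2 \rceil}(o_\opa(n) - e_\opa(n)) \geq 0$ for every $n \geq 1$, and by Theorem~\ref{thm1} the left-hand side is exactly the coefficient of $q^n$ in
$$P(q) \;=\; \prod_{n=1}^{\infty} \frac{1}{1 + (-1)^n q^{2n-1}} \;=\; \frac{1}{(q,-q^3;q^4)_\infty}.$$
So the problem reduces to proving that every coefficient of this infinite product is nonnegative. Expanding each factor as a geometric series gives the explicit signed count
$$[q^n]\,P(q) \;=\; \sum_{\lambda \in \opa_n} (-1)^{m_3(\lambda)},$$
where $m_3(\lambda)$ denotes the number of parts of $\lambda$ congruent to $3$ modulo $4$, counted with multiplicity. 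The plan is to prove that this alternating sum is nonnegative for all $n$.

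My first line of attack would be to construct a sign-reversing involution $\phi : \opa_n \to \opa_n$ satisfying $m_3(\phi(\lambda)) \not\equiv m_3(\lambda) \pmod{2}$ whenever $\phi(\lambda) \neq \lambda$; the size-two orbits then cancel in the signed sum, leaving only fixed points, all of which contribute $+1$. The building blocks would be local moves on odd partitions that preserve the weight and flip the parity of $m_3$, and these are plentiful: $1+1+1 \leftrightarrow 3$, $3+1+1 \leftrightarrow 5$, $3+3+3 \leftrightarrow 9$, and in general any exchange between parts of residues $1$ and $3$ modulo $4$ whose net effect is an odd change to the residue-$3$ count. The step that remains is to combine these local exchanges into a single globally defined involution, presumably by imposing a canonical priority that selects the smallest applicable location in a fixed reading order.

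The heart of the difficulty, and the reason Conjecture~\ref{conj1} has stood open since \cite{collmayersmayers}, is the consistency of this priority rule: a local move in one region can create or destroy applicability of moves elsewhere, so that a naive rule such as ``apply $1+1+1 \leftrightarrow 3$ whenever possible'' fails to satisfy $\phi^2 = \mathrm{id}$. An alternative route is analytic: seek an explicit $q$-series expansion of $1/(q,-q^3;q^4)_\infty$ as a manifestly nonnegative sum, for instance through Jacobi's triple product, a Bailey-pair transformation, or a Rogers--Ramanujan-type identity at modulus $4$. No such expansion appears in the standard tables, and any workable candidate would have to encode precisely the cancellations between residue-$1$ and residue-$3$ parts that the involutive argument needs to describe. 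Both routes confront the same core obstacle, and overcoming it is the main open challenge of the program.
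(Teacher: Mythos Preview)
Your proposal is not a proof, and you acknowledge this yourself: the final paragraph openly states that both the involutive and the analytic route confront an unresolved obstacle, and that overcoming it ``is the main open challenge of the program.'' That is an accurate assessment. The paper does not prove this statement either; it is stated as a conjecture, equivalent (as the paper notes and as you correctly rederive) to the nonnegativity of the coefficients of $1/(q,-q^3;q^4)_\infty$, and the paper explicitly says ``The non-negativity seems very difficult to prove, and their conjecture is still open.''

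Your reduction is correct and matches the paper's: via Theorem~\ref{thm1} the conjecture is exactly the assertion that $[q^n]\,1/(q,-q^3;q^4)_\infty \ge 0$ for all $n\ge 1$, and your interpretation of this coefficient as $\sum_{\lambda\in\opa_n}(-1)^{m_3(\lambda)}$ is valid. What is missing is precisely the thing you identify as missing: a globally consistent sign-reversing involution, or a manifestly positive $q$-series expansion. The local moves you list are genuine parity-flipping weight-preserving exchanges, but as you say, assembling them into an involution with $\phi^2=\mathrm{id}$ is where all naive attempts break down. So there is no error in your write-up, but it is a survey of the difficulty rather than a proof; the statement remains open in the paper and in your proposal alike.
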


It seems true that the non-negativity holds even if we take $\opa^{d}$ for any $d\ge 1$. 

\begin{cnj} For $n\ge 0$, $d\ge 1$,
$$(-1)^{ \lceil \frac{n}{2} \rceil } \big(o_{\opa^d}(n) -e_{\opa^d} (n) \big)=|e_{\opa^d}(n) -o_{\opa^d}(n) |. $$
\end{cnj}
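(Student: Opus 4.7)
By Corollary~\ref{cor3}, the conjecture is equivalent to the assertion that the infinite product
$$\frac{1}{(q,-q^{4d-1};q^{4d})_\infty}=\frac{1}{(q;q^{4d})_\infty}\cdot\frac{1}{(-q^{4d-1};q^{4d})_\infty}$$
has only nonnegative coefficients for every $d\ge 1$. The specialization $d=1$ is precisely Conjecture~\ref{conj1}, which the excerpt notes is open and seemingly very difficult; hence any proof of this stronger conjecture will in particular resolve Conjecture~\ref{conj1}, so we should expect genuinely new ideas rather than routine manipulations.

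Structurally, the question is a cancellation problem: the first factor above generates partitions into parts $\equiv 1\pmod{4d}$ with all coefficients nonnegative, while the second factor contributes alternating signs coming from parts $\equiv -1\pmod{4d}$. The first line of attack I would try is algebraic: look for a $q$-series identity rewriting the product as a sum or a quotient in which positivity is manifest, using Jacobi's triple product or Bailey-pair techniques, ideally in the form of a theta function divided by $(q^{4d};q^{4d})_\infty^2$. The complementary line of attack is combinatorial: on partitions into parts $\equiv \pm 1\pmod{4d}$, with parts $\equiv -1$ carrying a sign, construct a sign-reversing involution whose set of fixed points has generating function of the predicted sign $(-1)^{\lceil n/2\rceil}$. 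A refinement worth considering is to introduce a second statistic (for instance, the number of parts in a chosen residue class modulo $4d$) and prove a stronger joint nonnegativity; such refinements often expose additional combinatorial structure and specialize to the original statement.

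The principal obstacle is unmistakably the $d=1$ case: all approaches must, at that specialization, yield a proof of Conjecture~\ref{conj1}, but no sign-reversing involution or $q$-series identity giving manifest positivity has so far been found. The hardest step is therefore discovering either the right companion statistic or the right intermediate identity that unlocks even the base case; once such a device is in place, one would hope that a uniform argument for general $d$ can be obtained by systematically replacing the parts $1,3$ with $1,4d-1$ and the modulus $4$ with $4d$, though whether that scaling actually succeeds would need to be verified rather than assumed.
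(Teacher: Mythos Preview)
The paper does not prove this statement: it is presented in the concluding remarks as an open conjecture, and immediately afterward the paper records exactly the reformulation you give, namely that the conjecture is equivalent to the nonnegativity of the coefficients of $\dfrac{1}{(q,-q^{4d-1};q^{4d})_\infty}$. So there is no ``paper's own proof'' to compare against, and your reduction via Corollary~\ref{cor3} matches the paper's treatment precisely.

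Your write-up is not a proof but a research outline, and you are explicit about this; that is the honest position, since the $d=1$ specialization is Conjecture~\ref{conj1}, which the paper states is open. There is therefore no mathematical error to flag, only the observation that the text you submitted does not constitute a proof (nor does it claim to). If the assignment expected an actual proof, none is available in the paper or, to the authors' knowledge, in the literature; your suggested lines of attack (a $q$-series identity yielding manifest positivity, or a sign-reversing involution on partitions into parts $\equiv \pm 1 \pmod{4d}$) are reasonable directions but remain speculative.
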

Equivalently, for any $d\ge 1$, 
\begin{equation}
\frac{1}{(q,-q^{4d-1};q^{4d})_{\infty}} \label{eq12}
\end{equation}
has non-negative coefficients.  In what follows, we list some conjectures that yields the non-negativity of the coefficients in \eqref{eq12}. 

\begin{cnj} For $m\ge 4$, every coefficient of
$$\dfrac{1}{(q,-q^{m-1};q^m)_\infty} $$ is nonnegative.
\end{cnj}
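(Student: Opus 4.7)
The plan is to establish the conjecture by recasting the positivity claim as a signed-partition identity and then searching for a sign-reversing involution, pursued in parallel with an attempted $q$-series manipulation. Expanding each factor of the product, the coefficient of $q^n$ in $\dfrac{1}{(q,-q^{m-1};q^m)_\infty}$ equals
\[
\sum_{\lam}(-1)^{b(\lam)},
\]
where $\lam$ ranges over partitions of $n$ whose parts all lie in the residue classes $1$ and $m-1$ modulo $m$, and $b(\lam)$ is the total multiplicity of the parts in the $(m-1)$-class. The conjecture is therefore equivalent to producing a weight-preserving, sign-reversing involution on this signed set whose fixed points (if any) all carry sign $+1$.

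The first step I would carry out is the elementary rewrite
\[
\frac{1}{(q,-q^{m-1};q^m)_\infty}=\frac{(q^{m-1};q^m)_\infty}{(q;q^m)_\infty\,(q^{2m-2};q^{2m})_\infty},
\]
obtained by applying $(1+x)^{-1}=(1-x)/(1-x^2)$ to each factor of $(-q^{m-1};q^m)_\infty^{-1}$. Splitting the moduli to base $q^{2m}$ (so that, e.g., $(q;q^m)_\infty=(q;q^{2m})_\infty(q^{m+1};q^{2m})_\infty$) and invoking the Jacobi triple product on the resulting pieces may allow one to recognize the numerator as a theta series whose sign pattern absorbs the negative contributions in the signed-partition expansion of the denominator. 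If this goes through, the conjecture reduces to a manifestly nonnegative sum, possibly one of Rogers--Ramanujan or Andrews--Gordon type.

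The second step, pursued in parallel, is combinatorial: define a candidate involution by locating within $\lam$ the \emph{smallest} part $p$ in the $(m-1)$-class (or, failing that, the smallest packet of $m-1$ parts in the $1$-class) and swapping between the two forms. Concretely, a single part of size $km+(m-1)$ can be exchanged for a multi-set of $m-1$ parts $\equiv 1\pmod{m}$ whose sizes encode a partition of $k$ into at most $m-1$ pieces, and this exchange flips the parity of $b(\lam)$. The hypothesis $m\ge 4$ should enter precisely in guaranteeing that the resulting map is invertible: for small $m$ the room to distribute $k$ among $m-1$ cells is too tight, and coincidences produce genuinely negative coefficients (as the $m=3$ case shows numerically).

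The main obstacle, and the reason this statement is listed as a conjecture in the paper, is that the natural swap described above is not visibly involutive: iterating it changes the state drastically, and selecting a canonical representative amounts to a non-trivial packing problem with a subtle parity constraint. I expect that a successful proof will require either (i) a new Bailey-chain iteration tailored to the mixed residues $\pm 1\pmod{m}$, producing a Rogers--Ramanujan-like identity in which the positivity is manifest, or (ii) a refinement of the generating function by an additional statistic (for instance, the number of distinct part sizes or the largest part) whose weighted sum is provably a nonnegative polynomial. Since the case $m=4$ coincides with the long-standing Conjecture~\ref{conj1}, any progress on the general statement would simultaneously settle the main conjecture of Coll, Mayers, and Mayers.
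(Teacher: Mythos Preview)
The paper provides no proof of this statement: it appears in the concluding remarks as an open conjecture, with no argument offered. There is therefore nothing in the paper to compare your proposal against.

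Your proposal is not a proof either, and you are candid about this. The elementary rewrite
\[
\frac{1}{(q,-q^{m-1};q^m)_\infty}=\frac{(q^{m-1};q^m)_\infty}{(q;q^m)_\infty\,(q^{2m-2};q^{2m})_\infty}
\]
is correct, but from there you only \emph{suggest} that a triple-product manipulation ``may allow one to recognize the numerator as a theta series whose sign pattern absorbs the negative contributions''; no such identity is produced, and it is not clear one exists in the required form for general $m$. Likewise, the sign-reversing involution you sketch is not actually defined: you describe swapping a single part $km+(m-1)$ for $m-1$ parts in the $1$-class encoding a partition of $k$, but you do not specify the encoding, do not verify that the inverse recovers the original part, and do not show that the fixed points carry positive sign. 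You yourself flag that the map ``is not visibly involutive'' and that the hypothesis $m\ge 4$ only ``should'' enter to guarantee invertibility.

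In short, what you have written is a reasonable research plan for attacking an open problem, not a proof. Since the paper itself leaves the statement as a conjecture, your proposal neither matches nor diverges from a paper proof---there is none---and the genuine gap is simply that the conjecture remains unproved.
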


\begin{cnj}[increasing sequences by $m$] \label{conj5}
For $m \ge 4$, there exists a positive integer $N(m)$ such that  for any $n\ge N(m)$, 
$$
\left[q^n\right] \dfrac{1}{(q,-q^{m-1};q^m)_{\infty}} \ge \left[q^{n-m}\right] \dfrac{1}{(q,-q^{m-1};q^m)_{\infty}
},
$$
where  $[q^n]F(q)$ denotes the coefficient of $q^n$ in the series $F(q)$. 
\end{cnj}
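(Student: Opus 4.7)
The plan is to establish the conjecture via an asymptotic analysis of the coefficients $a_n := [q^n]\,F_m(q)$, where $F_m(q) := 1/(q,-q^{m-1};q^m)_\infty$. The first move is to remove the signs in $F_m$ by means of the identity $(1+x)^{-1}=(1-x)/(1-x^2)$ applied to each factor of $(-q^{m-1};q^m)_\infty$, which yields
$$F_m(q) \;=\; \frac{(q^{m-1};q^m)_\infty}{(q;q^m)_\infty\,(q^{2m-2};q^{2m})_\infty}.$$
In this form $F_m$ is a ratio of ordinary $q$-Pochhammers and its analytic behavior at the dominant singularity $q = 1$ is transparent. A standard Mellin--Barnes calculation, using that each factor contributes a term of the form $\pi^2/(6N\varepsilon)$ with $N$ the spacing of its arithmetic progression, gives
$$\log F_m(e^{-\varepsilon}) \;=\; \frac{\pi^2}{12\,m\,\varepsilon} + O(\log \varepsilon) \qquad (\varepsilon\to 0^+).$$

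Next I would apply Meinardus' theorem (or a direct saddle-point / circle-method analysis) to obtain a full asymptotic expansion
$$a_n \;=\; A_m\, n^{-\gamma_m}\,\exp\!\left(\pi\sqrt{\tfrac{n}{3m}}\right)\!\left(1 + \frac{c_{m,1}}{\sqrt{n}} + \frac{c_{m,2}}{n} + \cdots\right),$$
with a positive leading constant $A_m$, an explicit exponent $\gamma_m$, and coefficients $c_{m,i}$ determined by the Laurent expansion at the saddle point $\varepsilon^{\ast} = \pi/(2\sqrt{3mn})$. The value $B_m := \pi/\sqrt{3m}$ of the exponential rate reads off the leading behaviour above.

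The third step is a direct calculation: since $\sqrt{n}-\sqrt{n-m} = m/(2\sqrt{n}) + O(n^{-3/2})$ and $(n-m)^{\gamma_m}/n^{\gamma_m} = 1 + O(n^{-1})$, substitution gives
$$\frac{a_n}{a_{n-m}} \;=\; 1 + \frac{\pi}{2}\sqrt{\tfrac{m}{3n}} + O(n^{-1}),$$
which strictly exceeds $1$ once $n$ is large, proving the conjectured monotonicity with an effective $N(m)$.

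The main obstacle is in the second step: the monotonicity gain is only of order $n^{-1/2}$, so the asymptotic expansion must resolve the first correction to $a_n$ explicitly and with rigorous error bounds. In particular one must control the contributions of secondary saddles at nontrivial roots of unity on $|q|=1$, which contribute oscillating terms of the form $\omega^n \exp(C\sqrt{n})$ with $C < B_m$; these are formally subdominant but must be shown to be smaller than the gap $a_n\cdot \pi\sqrt{m/(3n)}/2$. A careful circle-method argument in the spirit of Hardy--Ramanujan and Meinardus should suffice. A purely combinatorial approach via an injection from the objects counted by $a_{n-m}$ to those counted by $a_n$ seems much harder to execute, since $F_m$ is a signed generating function whose very non-negativity is itself only Conjecture 4 of the paper.
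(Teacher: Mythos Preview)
The statement you are addressing is Conjecture~5 of the paper, placed in the Concluding Remarks section. The paper offers \emph{no} proof whatsoever; it is presented as open. So there is nothing to compare your argument against.

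As an attack on the open problem, your outline is the natural analytic one and the computations you actually carry out are correct: the identity
\[
F_m(q)=\frac{(q^{m-1};q^m)_\infty}{(q;q^m)_\infty\,(q^{2m-2};q^{2m})_\infty}
\]
holds, the three $\pi^2/(6N\varepsilon)$ contributions combine to $\pi^2/(12m\varepsilon)$, and the saddle-point heuristic then gives $a_n/a_{n-m}=1+\tfrac{\pi}{2}\sqrt{m/(3n)}+O(1/n)$.

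That said, what you have written is a plan, not a proof, and you acknowledge as much. Two points deserve emphasis. First, Meinardus' theorem in its standard formulation requires nonnegative exponents in $\prod(1-q^n)^{-a_n}$; your rewritten $F_m$ has the factor $(q^{m-1};q^m)_\infty$ in the numerator, so you cannot quote Meinardus off the shelf and must genuinely run a circle-method argument. Second, the decisive step---bounding the minor-arc contributions by something $o(n^{-1/2}e^{B_m\sqrt n})$ so that they cannot overturn the $O(n^{-1/2})$ gain from the major arc---is precisely what you defer with ``should suffice.'' For an eta-quotient of this shape one expects the exponential rates at the other cusps to be strictly smaller than $B_m$, so the program is plausible; but until those estimates are written down the conjecture remains a conjecture, which is exactly the status the paper assigns it.
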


\end{document}